
\documentclass{amsart}
\usepackage{amscd,enumerate}
\usepackage{amsfonts}
\usepackage{amssymb}
\usepackage{amsmath}

\setcounter{MaxMatrixCols}{10}

\theoremstyle{plain}
\newtheorem{theorem}{Theorem}[section]

\newtheorem{corollary}[theorem]{Corollary}

\newtheorem{lemma}[theorem]{Lemma}

\newtheorem{proposition}[theorem]{Proposition}

\begin{document}
\title{Similarity for zero-square matrices}
\author{Grigore C\u{a}lug\u{a}reanu}

\begin{abstract}
We show that an $n\times n$ zero-square matrix over a commutative unital
ring $R$ is similar to a multiple of $E_{1n}$ if $R$ is a B\'{e}zout domain
and $n=2$, $3$, but there are zero-square matrices which are not similar to
any multiple of $E_{1n}$ whenever $n\geq 4$, over any commutative unital
ring. As a consequence, for $n=2$, $3$ such matrices have stable range one.
\end{abstract}

\subjclass[2010]{Primary: 15B33; Secondary:16U100, 16U30 }
\keywords{zero-square matrix; GCD domain; B\'{e}zout domain; matrix
similarity}
\maketitle

\section{Introduction}

An integral domain is a \textsl{GCD} domain if every pair $a$, $b$ of
nonzero elements has a greatest common divisor, denoted by $\mathrm{gcd}%
(a,b) $ and a \textsl{B\'{e}zout} domain if $\gcd (a,b)$ is a linear
combination of $a$ and $b$. GCD domains include unique factorization
domains, B\'{e}zout domains and valuation domains. If $\gcd (a,b)=1$ we say
that $a$ and $b$ are \textsl{coprime}.

It is not hard to prove that \emph{every zero-square }$2\times 2$\emph{\
matrix over a B\'{e}zout domain }$R$\emph{\ is similar to }$rE_{12}$\emph{,
for some }$r\in R$ (see \cite{CZ}).

The aim of this paper is to extend the above result for zero-square $3\times
3$ matrices over B\'{e}zout domains and to show that the property cannot be
extended for $n\times n$ zero-square matrices if $n\geq 4$. That is, we
prove the following

\textbf{Theorem}. \emph{Let }$R$\emph{\ be a B\'{e}zout domain. Every
zero-square matrix of }$\mathbb{M}_{3}(R)$\emph{\ is similar to }$rE_{13}$%
\emph{\ for some }$r\in R$\emph{.}

\textbf{Theorem}. \emph{Over any commutative ring and for every }$n\geq 4$%
\emph{, there are zero-square }$n\times n$\emph{\ matrices which are not
similar to multiples of }$E_{1n}$\emph{.}

In our extension we have to solve a special type of completion problem: two
unimodular $3$-rows are given with some additional properties and we are
searching for a completion to an invertible $3\times 3$ matrix.

In Section 2, general results on zero-square $n\times n$ matrices are proved
together with second theorem above.

For the sake of completeness, Section 3 covers the zero-square $2\times 2$
case. In Section 4 we prove the first theorem above, that is, we settle the $%
3\times 3$ zero-square case. Since multiples of $E_{ij}$ are known to have
stable range one, a consequence of our results is that zero-square $2\times
2 $ and $3\times 3$ matrices over any B\'{e}zout domain (in particular over
the integers) have stable range one.

$E_{ij}$ denotes the $n\times n$ matrix with all entries zero excepting the $%
(i,j)$ entry which is $1$. By $0_{n}$ we denote the zero $n\times n$ matrix.
For a square matrix $A$ over a \emph{commutative} ring $R$, the determinant
and trace of $A$ are denoted by $\mathrm{det}(A)$ and $\mathrm{Tr}(A)$,
respectively. For a matrix $A$, $\gcd (A)$ denotes the greatest common
divisor of all the entries of $A$. For a unital ring $R$, $U(R)$ denotes the
set of all the units of $R$.

\section{Zero-square $n\times n$ matrices}

In order to describe the zero-square $n\times n$ matrices over commutative
(unital) rings or over integral domains, denote by $T_{ab}^{cd}$ the $%
2\times 2$ minor on the rows $a$ and $b$ and on the columns $c$ and $d$. A
simple computation of $\mathrm{row}_{i}(T)\cdot \mathrm{col}_{j}(T)$ for $%
i\neq j$ or $i=j$ gives

\begin{proposition}
\label{squ}Let $T=[t_{ij}]_{1\leq i,j\leq n}$ be an $n\times n$ matrix over
a commutative ring $R$ and let $t_{ij}^{(2)}$ be the entries of $T^{2}$.
Then 
\begin{equation*}
\begin{array}{cccc}
t_{ij}^{(2)} & = & t_{ij}\mathrm{Tr}(T)+\sum_{k\in
\{1,...,n\}-\{i,j\}}T_{ik}^{kj} & i\neq j \\ 
t_{ii}^{(2)} & = & t_{ii}\mathrm{Tr}%
(T)+T_{i1}^{1i}+...+T_{i,i-1}^{i-1,i}+T_{i,i+1}^{i+1,i}+...+T_{in}^{ni} & i=j%
\end{array}%
.
\end{equation*}
\end{proposition}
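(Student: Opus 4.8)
The plan is to compute the $(i,j)$ entry of $T^{2}$ straight from the definition of matrix multiplication, namely $t_{ij}^{(2)}=\mathrm{row}_{i}(T)\cdot \mathrm{col}_{j}(T)=\sum_{k=1}^{n}t_{ik}t_{kj}$, and then simply rearrange this sum so that the trace term and the prescribed $2\times 2$ minors become visible. Recall that by definition $T_{ab}^{cd}=t_{ac}t_{bd}-t_{ad}t_{bc}$, so in particular $T_{ik}^{kj}=t_{ik}t_{kj}-t_{ij}t_{kk}$ and $T_{ik}^{ki}=t_{ik}t_{ki}-t_{ii}t_{kk}$.

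First I would handle the off-diagonal case $i\neq j$. Split the sum $\sum_{k=1}^{n}t_{ik}t_{kj}$ into the three pieces $k=i$, $k=j$, and $k\notin \{i,j\}$, which gives $t_{ii}t_{ij}+t_{ij}t_{jj}+\sum_{k\notin \{i,j\}}t_{ik}t_{kj}$. On the other hand, write $t_{ij}\mathrm{Tr}(T)=t_{ij}\sum_{m=1}^{n}t_{mm}=t_{ii}t_{ij}+t_{ij}t_{jj}+\sum_{m\notin \{i,j\}}t_{ij}t_{mm}$, again separating the indices $m=i$, $m=j$ and $m\notin \{i,j\}$. Adding $\sum_{k\notin \{i,j\}}T_{ik}^{kj}=\sum_{k\notin \{i,j\}}(t_{ik}t_{kj}-t_{ij}t_{kk})$ to this expression, the terms $\sum_{k\notin \{i,j\}}t_{ij}t_{kk}$ cancel, and what remains is exactly $t_{ii}t_{ij}+t_{ij}t_{jj}+\sum_{k\notin \{i,j\}}t_{ik}t_{kj}=t_{ij}^{(2)}$, as claimed.

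For the diagonal case $i=j$ the computation is the same but shorter. Here $t_{ii}^{(2)}=\sum_{k=1}^{n}t_{ik}t_{ki}=t_{ii}^{2}+\sum_{k\neq i}t_{ik}t_{ki}$, while $t_{ii}\mathrm{Tr}(T)=t_{ii}^{2}+\sum_{k\neq i}t_{ii}t_{kk}$. Adding $\sum_{k\neq i}T_{ik}^{ki}=\sum_{k\neq i}(t_{ik}t_{ki}-t_{ii}t_{kk})$ makes the $\sum_{k\neq i}t_{ii}t_{kk}$ terms cancel and leaves $t_{ii}^{2}+\sum_{k\neq i}t_{ik}t_{ki}=t_{ii}^{(2)}$; since $\sum_{k\neq i}T_{ik}^{ki}=T_{i1}^{1i}+\cdots +T_{i,i-1}^{i-1,i}+T_{i,i+1}^{i+1,i}+\cdots +T_{in}^{ni}$, this is precisely the stated identity.

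There is no genuine obstacle in this proposition: it is a pure bookkeeping identity, and commutativity of $R$ enters only in matching the products $t_{ij}t_{kk}$ (resp. $t_{ii}t_{kk}$) coming from the trace term with those hidden inside the minors. The only point demanding a little care is tracking exactly which summation indices are extracted from each sum, so that the cancellation of the $t_{ij}t_{kk}$ (resp. $t_{ii}t_{kk}$) terms is transparent; once the sums are split along the indices $i$ and $j$, the claimed formulas drop out immediately.
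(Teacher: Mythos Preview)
Your argument is correct and is exactly the direct computation the paper has in mind: the paper itself offers no written proof beyond the sentence ``A simple computation of $\mathrm{row}_{i}(T)\cdot \mathrm{col}_{j}(T)$ for $i\neq j$ or $i=j$ gives'', and your proposal simply spells out that computation in full.
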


First recall that the \textsl{rank} of a (not necessarily square) matrix $A$
(denoted $\mathrm{rk}(A)$) can be defined \emph{over any commutative ring} $%
R $, using the annihilators of the ideals $I_{t}(A)$ generated by the $%
t\times t$ minors of $A$ (see e. g. \cite{bro}). In particular, $\mathrm{rk}%
(A)=1$ if all $2\times 2$ minors are zero and and these two condition are
equivalent over integral domains. Then it can be shown that \emph{equivalent
matrices} (so, in particular, similar matrices) \emph{have the same rank}
(see \cite{bro}, \textbf{4.11}).

Therefore

\begin{corollary}
\label{rt}Let $T$ be an $n\times n$ matrix over any commutative ring. If all 
$2\times 2$ minors of $T$ are zero and $\mathrm{Tr}(T)=0$ then $T^{2}=0_{n}$.
\end{corollary}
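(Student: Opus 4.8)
The plan is to read the conclusion straight off Proposition \ref{squ}, so the argument is essentially a matter of bookkeeping. That proposition writes each entry $t_{ij}^{(2)}$ of $T^{2}$ as a sum of two kinds of contributions: a ``trace'' term $t_{ij}\,\mathrm{Tr}(T)$ (respectively $t_{ii}\,\mathrm{Tr}(T)$ on the diagonal), together with a sum of $2\times 2$ minors of $T$ of the shape $T_{ik}^{kj}$. First I would kill the trace term using the hypothesis $\mathrm{Tr}(T)=0$: every such summand becomes $0$ immediately.

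Next I would check that the remaining summands really are $2\times 2$ minors of $T$, and hence vanish by the other hypothesis. In the off-diagonal formula the index $k$ ranges over $\{1,\dots,n\}-\{i,j\}$ with $i\neq j$, so the rows $i,k$ are distinct and the columns $k,j$ are distinct, and $T_{ik}^{kj}$ is an honest $2\times 2$ minor. In the diagonal formula each summand $T_{im}^{mi}$ has $m\neq i$, so again the two rows and the two columns are distinct; it too is a $2\times 2$ minor of $T$. By assumption all $2\times 2$ minors of $T$ are zero, so every one of these contributions vanishes as well.

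Putting the two observations together, each entry $t_{ij}^{(2)}$ is a sum of terms that are all zero, hence $t_{ij}^{(2)}=0$ for every $i,j$, i.e. $T^{2}=0_{n}$. There is no real obstacle: the only thing to verify is the index bookkeeping of the previous paragraph, guaranteeing that no degenerate (repeated-index) ``minor'' sneaks into the formulas of Proposition \ref{squ}. Note also that no integral-domain hypothesis is used here — commutativity of $R$ is all that is needed, since that is exactly the setting in which Proposition \ref{squ} holds.
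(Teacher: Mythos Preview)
Your argument is correct and is exactly the intended one: the paper derives the corollary directly from Proposition~\ref{squ} (note the ``Therefore'' preceding the statement), and your bookkeeping that each $T_{ik}^{kj}$ appearing has distinct row indices and distinct column indices is precisely what makes the deduction immediate. Nothing further is needed.
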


\textbf{Remark}. Over any integral domain a (well-known) converse also
holds: \emph{If }$T^{2}=0_{n}$\emph{\ then }$\det (T)=\mathrm{Tr}(T)=0$.

\bigskip

Over any integral domain, in order to have a characterization of form 
\begin{equation*}
T^{2}=0_{n}\text{ if and only if }\mathrm{rk}(T)=1\text{ and }\mathrm{Tr}%
(T)=0,
\end{equation*}%
the only remaining implication is that, $T^{2}=0_{n}$ and $\mathrm{Tr}(T)=0$
imply $\mathrm{rk}(T)=1$ (i.e. all $2\times 2$ minors of $T$ vanish).

In what follows we show that this implication holds over a commutative ring
for $n=3$ if $2$ is not a zero divisor, but fails for any $n\geq 4$.

\begin{theorem}
\label{2-min}Let $R$ be a commutative unital ring such that $2$ is not a
zero divisor and let $T\in \mathbb{M}_{3}(R)$ with $\mathrm{Tr}(T)=0$. Then $%
T^{2}=0_{3}$ if and only if all $2\times 2$ minors of $T$ equal zero.
\end{theorem}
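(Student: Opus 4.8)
The backward implication is exactly Corollary \ref{rt}, so the work is entirely in the forward direction: assuming $T^2 = 0_3$ and $\mathrm{Tr}(T) = 0$, I must show every $2\times 2$ minor of $T$ vanishes. The plan is to exploit the explicit formulas of Proposition \ref{squ} together with the known relation $\det(T) = 0$ (the Remark, valid since $R$ is a domain — wait, here $R$ need not be a domain; but the Cayley–Hamilton identity $T^3 - \mathrm{Tr}(T)T^2 + c_2(T)T - \det(T)I = 0$ holds over any commutative ring, and with $T^2 = 0$ and $\mathrm{Tr}(T)=0$ this collapses to $c_2(T)\,T = \det(T)\,I$, where $c_2(T)$ is the sum of the principal $2\times 2$ minors). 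This is the key leverage. Taking traces in $c_2(T)\,T = \det(T)\,I$ gives $0 = 3\det(T)$, and comparing off-diagonal entries gives $c_2(T)\,t_{ij} = 0$ for $i \neq j$; comparing diagonal entries gives $c_2(T)\,t_{ii} = \det(T)$, so in particular $c_2(T)\,t_{11} = c_2(T)\,t_{22} = c_2(T)\,t_{33} = \det(T)$.

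Next I would extract relations from $T^2 = 0_3$ via Proposition \ref{squ}. With $n=3$ and $\mathrm{Tr}(T) = 0$, the off-diagonal equations read $t_{ij}^{(2)} = T_{ik}^{kj} = 0$ where $k$ is the unique index in $\{1,2,3\}\setminus\{i,j\}$; that is, each $2\times 2$ minor $T_{ik}^{kj}$ (a minor on two distinct rows and two distinct columns, straddling the "wrong" diagonal) is zero. The diagonal equations read $t_{ii}^{(2)} = \sum_{k \neq i} T_{ik}^{ki} = 0$, i.e. $T_{12}^{21} + T_{13}^{31} = 0$, etc.; note $T_{ik}^{ki} = -M_{ik}$ where... more precisely $T_{ik}^{ki}$ is the $2\times 2$ minor on rows $\{i,k\}$, columns $\{k,i\}$, which equals $t_{ik}t_{ki} - t_{ii}t_{kk}$. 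So the diagonal equations become $(t_{12}t_{21} - t_{11}t_{22}) + (t_{13}t_{31} - t_{11}t_{33}) = 0$ and its cyclic analogues. Adding all three and using $\mathrm{Tr}(T)=0$ recovers $c_2(T)$-type information; subtracting pairs shows, e.g., $t_{12}t_{21} + t_{13}t_{31} - t_{11}(t_{22}+t_{33}) = t_{12}t_{21} + t_{13}t_{31} + t_{11}^2$, and comparing two such relations yields expressions for the differences of the principal $2\times 2$ minors.

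The endgame is to combine these. The $2\times 2$ minors come in three types: principal ones $P_i := t_{jk}t_{kj} - t_{jj}t_{kk}$ (with $\{i,j,k\} = \{1,2,3\}$), "row $=$ column set but reversed" minors like $T_{12}^{21}$, and genuinely off-support minors like $T_{12}^{13}$ or $T_{12}^{23}$. The off-diagonal $T^2$ equations directly kill a batch of the mixed minors; for the remaining ones and the principal ones I would: (i) use $c_2(T) = P_1 + P_2 + P_3$ together with $c_2(T)\,t_{ij} = 0$ and $c_2(T)\,t_{ii} = \det(T)$; (ii) use that $2$ is not a zero divisor to divide relations of the form $2X = 0$ — this is where the hypothesis on $2$ enters, presumably when summing a minor equation with itself or with a cyclic copy forces a factor of $2$ onto a minor. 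I expect the main obstacle to be the bookkeeping: organizing the nine entries and the (up to) nine distinct $2\times 2$ minors, and finding the right linear combinations of the six scalar equations from $T^2=0_3$ (three off-diagonal, three diagonal) plus the Cayley–Hamilton consequences so that each minor is isolated and shown to be annihilated, then cancelled using the non-zero-divisor status of $2$. A clean route may be to first treat $\det(T) = 0$ as established (from $3\det(T) = 0$ we only get $\det(T)=0$ if $3$ is a non-zero-divisor, which is not assumed — so instead derive $\det(T)=0$ directly from $T^2=0$ by noting $\det(T)^? $; in fact $\det(T^2) = \det(T)^2 = 0$, and over a general commutative ring $\det(T)^2 = 0$ need not give $\det(T)=0$, so $\det(T)$ may only be nilpotent — I would carry $\det(T)$ symbolically rather than assume it vanishes, and let the final minor identities absorb it).
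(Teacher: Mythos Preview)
Your core strategy --- read off the nine equations $t_{ij}^{(2)}=0$ via Proposition~\ref{squ} with $\mathrm{Tr}(T)=0$ --- is exactly the paper's. You correctly observe that the off-diagonal equations (there are six, not three: one for each ordered pair $i\neq j$) are precisely the vanishing of the six non-principal $2\times 2$ minors $T_{ik}^{kj}$. Where you stall is the three principal minors, and here the Cayley--Hamilton detour is a red herring: the paper never touches $c_2(T)$ or $\det(T)$.

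The three diagonal equations $\sum_{k\neq i}T_{ik}^{ki}=0$, once you use $T_{ik}^{ki}=-T_{ik}^{ik}$, read simply
\[
P_2+P_3=0,\qquad P_1+P_3=0,\qquad P_1+P_2=0,
\]
where $P_k=T_{ij}^{ij}$ (with $\{i,j,k\}=\{1,2,3\}$) are the three principal minors. Pairwise differences give $P_1=P_2=P_3$; substituting back gives $2P_i=0$, hence $P_i=0$ since $2$ is a non-zero-divisor. That is the entire endgame. The paper performs the identical computation in explicit entries: from its equations (1), (5), (9) it derives $2(xy-ab)=0$, hence $xy=ab$, and then the remaining two principal minors fall out immediately. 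So you can drop Cayley--Hamilton and the worry about whether $\det(T)$ vanishes; the three diagonal $T^2=0$ equations alone already isolate each principal minor behind a factor of $2$, and that is the sole place the hypothesis on $2$ is invoked.
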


\begin{proof}
To avoid too many indexes and emphasize the diagonal elements (i.e. the zero
trace) we write $T=\left[ 
\begin{array}{ccc}
x & a & c \\ 
b & y & e \\ 
d & f & -x-y%
\end{array}%
\right] $.

If $\mathrm{Tr}(T)=0$, the condition $T^{2}=0_{3}$ is \emph{equivalent} to
the following nine LHS equalities 
\begin{equation*}
\begin{array}{c}
x^{2}+ab+cd=0\text{ \ \ \ (1)} \\ 
a(x+y)+cf=0\text{ \ \ \ (2)} \\ 
ae=cy\text{ \ \ \ (3)} \\ 
b(x+y)+de=0\text{ \ \ \ (4)} \\ 
y^{2}+ab+ef=0\text{ \ \ \ (5)} \\ 
bc=ex\text{ \ \ \ (6)} \\ 
bf=dy\text{ \ \ \ (7)} \\ 
ad=fx\text{ \ \ \ (8)} \\ 
(x+y)^{2}+cd+ef=0\text{ \ \ \ (9)}%
\end{array}%
\begin{array}{c}
\\ 
T_{13}^{23}=0 \\ 
T_{12}^{23}=0 \\ 
T_{23}^{13}=0 \\ 
\\ 
T_{12}^{13}=0 \\ 
T_{23}^{12}=0 \\ 
T_{13}^{12}=0 \\ 
\end{array}%
.
\end{equation*}%
The two terms equalities (i.e., (3), (6), (7), (8)) are \emph{equivalent} to
the vanishing of four $2\times 2$ minors. Just look at the RHS column of
vanishing minors. Further, two other equalities, namely, (2) and (4), are 
\emph{equivalent} to the vanishing of another two minors.

Thus, this equivalently covers the six \emph{off diagonal} $2\times 2$
minors. What remains are the vanishing of the three $2\times 2$ \emph{%
diagonal} minors.

From $x^{2}+ab+cd=0$, $y^{2}+ab+ef=0$ and $(x+y)^{2}+cd+ef=0$ we get (since $%
2$ is not a zero divisor) $xy=ab$, and so another zero $2\times 2$ minor.
Finally using $x^{2}+ab+cd=0$, $y^{2}+ab+ef=0$ and $xy=ab$, we get the last
two zero $2\times 2$ diagonal minors: $x(x+y)+cd=0$ and $y(x+y)+ef=0$.

The converse was settled in the general $n\times n$ case in Corollary \ref%
{rt}.
\end{proof}

\textbf{Remark}. The hypothesis "$2$\emph{\ is not a zero divisor}" is
essential for the vanishing of the three diagonal $2\times 2$ minors (over
any commutative ring). Consider $R=\mathbb{Z}_{2}[X,Y]/I$ for $%
I:=(X^{2},Y^{2})$ and the diagonal matrix over $R$, $T=\left[ 
\begin{array}{ccc}
X+I & 0 & 0 \\ 
0 & Y+I & 0 \\ 
0 & 0 & X+Y+I%
\end{array}%
\right] $. Then $T^{2}=0_{3}$, $\mathrm{Tr}(T)=0$, but the diagonal minors
are not zero. Clearly, $2$ is a zero divisor in $R$.

\bigskip

Before dealing with the $3\times 3$ matrices case, here is an example of $%
4\times 4$ zero-square matrix (over any commutative unital ring) with zero
trace and rank 2.

\textbf{Example}. $C_{4}=\left[ 
\begin{array}{cccc}
0 & 0 & 1 & 1 \\ 
0 & 0 & 1 & 1 \\ 
-1 & 1 & 0 & 0 \\ 
1 & -1 & 0 & 0%
\end{array}%
\right] ^{2}=0_{4}$, has zero trace but many not zero $2\times 2$ minors
(e.g. $\left[ 
\begin{array}{cc}
0 & 1 \\ 
1 & 0%
\end{array}%
\right] $, in the center).

Hence $T^{2}=0_{4}$ does not generally imply $\mathrm{rk}(T)=1$. Adding to
this example as many zero rows and columns as necessary, $T^{2}=0_{n}$ does
not generally imply $\mathrm{rk}(T)=1$, for any $n\geq 5$.

Since nonzero multiples of $E_{1n}$ have rank 1, and similar matrices have
the same rank, we obtain

\begin{theorem}
Over any commutative unital ring and for every $n\geq 4$, there are $n\times
n$ zero-square matrices which are not similar to any multiple of $E_{1n}$.
\end{theorem}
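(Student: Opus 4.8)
The plan is to exhibit, for each $n\geq 4$, a single zero-square matrix of $\mathbb{M}_{n}(R)$ whose rank is provably at least $2$, and then to conclude via the similarity-invariance of rank. The natural candidate is the matrix $C_{4}$ of the Example above when $n=4$, and for $n\geq 5$ the matrix $C_{n}$ obtained from $C_{4}$ by bordering it with $n-4$ extra zero rows and zero columns. First I would record that $C_{n}^{2}=0_{n}$ — this is immediate for $C_{4}$ from the Example, and bordering by zeros clearly preserves squaring to zero — and that $\mathrm{Tr}(C_{n})=0$; these facts are not logically needed for the conclusion, but they certify that $C_{n}$ is genuinely a zero-square matrix.

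Next I would bound the rank of $C_{n}$ from below. Over an arbitrary commutative ring the rank is defined through the determinantal ideals $I_{t}$: one has $\mathrm{rk}(A)\geq t$ precisely when $\mathrm{Ann}(I_{t}(A))=0$. The matrix $C_{4}$ contains the central $2\times 2$ submatrix $\left[\begin{smallmatrix}0&1\\1&0\end{smallmatrix}\right]$, whose determinant is $-1\in U(R)$; hence $I_{2}(C_{n})=R$, so $\mathrm{Ann}(I_{2}(C_{n}))=0$, i.e. $\mathrm{rk}(C_{n})\geq 2$. On the other hand, every $2\times 2$ minor of $rE_{1n}$ vanishes (that matrix has a single nonzero entry), so $I_{2}(rE_{1n})=0$, whence $\mathrm{Ann}(I_{2}(rE_{1n}))=R\neq 0$ and $\mathrm{rk}(rE_{1n})\leq 1$ for every $r\in R$ (both $r=0$ and $r\neq 0$ being covered).

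Finally, since similar matrices are equivalent and equivalent matrices have the same rank (cited above from \cite{bro}, \textbf{4.11}), $C_{n}$ cannot be similar to any $rE_{1n}$, because their ranks differ. This establishes the theorem for all $n\geq 4$.

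The argument contains essentially no hard step; the only point requiring care is working with the ideal–annihilator definition of rank over a ring that may have zero divisors, and in particular verifying that a minor equal to a unit really does force the rank to be at least $2$ — which is precisely why the central $2\times 2$ block of $C_{4}$ is chosen with determinant $\pm 1$ rather than some other nonzero value.
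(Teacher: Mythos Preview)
Your proposal is correct and follows essentially the same approach as the paper: exhibit $C_{4}$ (and its zero-bordered extensions $C_{n}$), observe that it has a $2\times 2$ minor which is a unit while any $rE_{1n}$ has all $2\times 2$ minors zero, and invoke the similarity-invariance of rank. If anything, you are slightly more careful than the paper in handling the annihilator definition of rank over rings with zero divisors, explicitly noting that the minor equals $-1\in U(R)$ so that $I_{2}(C_{n})=R$, and bounding $\mathrm{rk}(rE_{1n})\leq 1$ uniformly in $r$ rather than asserting that nonzero multiples have rank exactly~$1$.
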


\section{The zero-square $3\times 3$ case}

The following lemma and proposition will be useful for the extension from $%
2\times 2$ to zero-square $3\times 3$ matrices.

\begin{lemma}
\label{1}Let $a,b,c,a^{\prime },b^{\prime },c^{\prime }\in R$, a GCD domain.
If $ab^{\prime }=a^{\prime }b$, $ac^{\prime }=a^{\prime }c$, $bc^{\prime
}=b^{\prime }c$ and the rows $\left[ 
\begin{array}{ccc}
a & b & c%
\end{array}%
\right] $ and $\left[ 
\begin{array}{ccc}
a^{\prime } & b^{\prime } & c^{\prime }%
\end{array}%
\right] $ are unimodular then the pairs $a,a^{\prime }$, $b,b^{\prime }$ and 
$c,c^{\prime }$ are associated. Moreover, there exists a unit $u\in U(R)$
such that $\left[ 
\begin{array}{ccc}
a^{\prime } & b^{\prime } & c^{\prime }%
\end{array}%
\right] =\left[ 
\begin{array}{ccc}
a & b & c%
\end{array}%
\right] u$.
\end{lemma}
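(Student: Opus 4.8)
The plan is to first establish the associatedness of the pairs $a,a'$; $b,b'$; $c,c'$, and then bootstrap this into the existence of a single unit $u$ working simultaneously for all three coordinates. For the first part I would work one pair at a time, say $a$ and $a'$. Let $d=\gcd(a,b,c)$; since the first row is unimodular, $d$ is a unit, so after dividing we may as well assume $\gcd(a,b,c)=1$ (and similarly $\gcd(a',b',c')=1$). Now the three cross-relations $ab'=a'b$, $ac'=a'c$, $bc'=b'c$ say precisely that the two rows are ``proportional'' in the sense that all $2\times2$ minors of the $2\times3$ matrix with these rows vanish. From $ab'=a'b$ and $ac'=a'c$ we get $a\mid a'b$ and $a\mid a'c$, and trivially $a\mid a'a$, hence $a\mid a'\gcd(a,b,c)=a'$; by the symmetric argument $a'\mid a$, so $a$ and $a'$ are associated. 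The same reasoning with the roles of the coordinates permuted handles $b,b'$ and $c,c'$. This is where the GCD hypothesis is used in an essential way: $a\mid a'b$ and $a\mid a'c$ do not by themselves give $a\mid a'$ without being able to take the gcd of $b$ and $c$ inside the divisibility.

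The second part is the step I expect to be the main obstacle, because a priori we get three possibly different units $u_a,u_b,u_c$ with $a'=au_a$, $b'=bu_b$, $c'=cu_c$, and we must show they can be taken equal. The plan is: substitute these into the cross-relations. From $ab'=a'b$ we get $a b u_b = a u_a b$, i.e. $ab(u_b-u_a)=0$; since $R$ is a domain, either $ab=0$ or $u_a=u_b$. Handle the degenerate cases separately. If, say, $a=0$, then from $\gcd(a,b,c)=1$ we have $\gcd(b,c)=1$, and $a'=au_a=0$ as well, so the ``unit for the $a$-coordinate'' is irrelevant and we may simply redefine $u_a:=u_b$; the remaining relation $bc'=b'c$ forces $bc(u_c-u_b)=0$, and since not both $b,c$ vanish (they are coprime and at least one is nonzero unless $1=0$), at least one of the two products $ab$, $ac$, $bc$ is nonzero in the non-degenerate configuration, which propagates equality of the relevant units; a short case check on how many of $a,b,c$ vanish completes this. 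In the generic case where at least two of $a,b,c$ are nonzero, at least two of the three products $ab,ac,bc$ are nonzero, forcing $u_a=u_b=u_c=:u$ directly, and then $[a'\ b'\ c']=[a\ b\ c]u$ as required.

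To keep the write-up clean I would phrase the degenerate bookkeeping as: after reducing to $\gcd(a,b,c)=\gcd(a',b',c')=1$, observe that a coordinate of the first row is zero if and only if the corresponding coordinate of the second is zero (immediate from $au_a=a'$ etc., or directly from the cross-relations together with coprimality), and that the nonzero coordinates all carry the same unit by the domain argument above; for any zero coordinate we are free to assign it that common unit as well. Thus a single $u\in U(R)$ satisfies $a'=au$, $b'=bu$, $c'=cu$, which is exactly the displayed conclusion $\left[\begin{array}{ccc} a' & b' & c'\end{array}\right]=\left[\begin{array}{ccc} a & b & c\end{array}\right]u$. The only real content is the gcd divisibility trick in part one and the ``domain kills the difference of units'' trick in part two; everything else is case management.
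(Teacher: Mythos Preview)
Your argument is correct. For the first part you take a cleaner route than the paper: you use unimodularity to reduce at once to $\gcd(a,b,c)=1$, and then the single divisibility trick $a\mid a'a,\ a\mid a'b,\ a\mid a'c\Rightarrow a\mid a'\gcd(a,b,c)=a'$ (and symmetrically) gives associatedness in one stroke. The paper instead factors through the \emph{pairwise} gcd $\delta=\gcd(a,b)$, first showing the coprime parts $a_1,a_1'$ are associates and then separately that $\delta,\delta'$ are associates via $\gcd(\delta,c)=1$; this is more circuitous and gains nothing. For the second part both arguments are the same domain cancellation $ab(u_b-u_a)=0$, but you are more careful: the paper writes ``$abv=aub$, so $v=u$'' without mentioning the possibility $ab=0$, whereas you explicitly manage the zero-coordinate cases. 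One small remark: in your write-up, the divisibility step $a\mid a'\gcd(a,b,c)$ tacitly uses the GCD-domain identity $\gcd(a'x,a'y)\sim a'\gcd(x,y)$ and implicitly assumes $a\neq0$; since you later show that $a=0\Leftrightarrow a'=0$ directly from the cross-relations and unimodularity, it would be cleanest to state that equivalence \emph{before} the divisibility argument rather than after.
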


\begin{proof}
Denote $\delta =\gcd (a,b)$ with $a=\delta a_{1}$, $b=\delta b_{1}$ and $%
\delta ^{\prime }=\gcd (a^{\prime },b^{\prime })$ and $a^{\prime }=\delta
^{\prime }a_{1}^{\prime }$, $b^{\prime }=\delta ^{\prime }b_{1}^{\prime }$.
From $ab^{\prime }=a^{\prime }b$ cancelling $\delta \delta ^{\prime }$ we
obtain $a_{1}b_{1}^{\prime }=a_{1}^{\prime }b_{1}$. Since $a_{1},b_{1}$ are
coprime, it follows $a_{1}\mid a_{1}^{\prime }$. Symmetrically, since $%
a_{1}^{\prime },b_{1}^{\prime }$ are coprime, it follows $a_{1}^{\prime
}\mid a_{1}$, so that $a_{1},a_{1}^{\prime }$ are associates. Hence there is
a unit $u\in U(R)$ such that $a_{1}=a_{1}^{\prime }u$.

Further, notice that $\gcd (\delta ,c)=\gcd (\gcd (a,b),c)=1$ and so $\delta
,c$ are coprime. Now we use $ac^{\prime }=a^{\prime }c$, that is, $\delta
(a_{1}^{\prime }u)c^{\prime }=\delta a_{1}c^{\prime }=\delta ^{\prime
}a_{1}^{\prime }c$. Cancelling $a_{1}^{\prime }$ we get $\delta uc^{\prime
}=\delta ^{\prime }c$ and since $\delta ,c$ are coprime, $\delta \mid \delta
^{\prime }$. Symmetrically, $\delta ^{\prime }\mid \delta $ and so $\delta
,\delta ^{\prime }$ are also associates. Therefore $a=\delta a_{1}$ and $%
a^{\prime }=\delta ^{\prime }a_{1}^{\prime }$ are associates.

In a similar way, it follows that $b,b^{\prime }$ and $c,c^{\prime }$ are
associates, respectively.

Finally, suppose $a^{\prime }=au$, $b^{\prime }=bv$ and $c^{\prime }=cw$ for
some $u,v,w\in U(R)$. From $ab^{\prime }=a^{\prime }b$ we get $abv=aub$, so $%
v=u$. Analogously, $w=v$ and so $w=v=u$, as claimed.
\end{proof}

\textbf{Remark}. The second hypothesis of the lemma can be stated as a
matrix rank: $\mathrm{rk}\left[ 
\begin{array}{ccc}
a & b & c \\ 
a^{\prime } & b^{\prime } & c^{\prime }%
\end{array}%
\right] =1$.

\begin{proposition}
\label{2}Let $R$ be a GCD domain. If $\mathrm{rk}\left[ 
\begin{array}{ccc}
a & b & c \\ 
a^{\prime } & b^{\prime } & c^{\prime }%
\end{array}%
\right] =1$ (i.e., $ab^{\prime }=a^{\prime }b$, $ac^{\prime }=a^{\prime }c$, 
$bc^{\prime }=b^{\prime }c$), $\delta =\gcd (a,b,c)$, $\lambda =\gcd
(a^{\prime },b^{\prime },c^{\prime })$ and $a=\delta a_{1}$, $b=\delta b_{1}$%
, $c=\delta c_{1}$, $a^{\prime }=\lambda a_{1}^{\prime }$, $b^{\prime
}=\lambda b_{1}^{\prime }$ and $c^{\prime }=\lambda c_{1}^{\prime }$, then $%
a_{1},b_{1},c_{1}$ and $a_{1}^{\prime },b_{1}^{\prime },c_{1}^{\prime }$ are
respectively associated (in divisibility). Moreover, $\left[ 
\begin{array}{ccc}
a_{1}^{\prime } & b_{1}^{\prime } & c_{1}^{\prime }%
\end{array}%
\right] =\left[ 
\begin{array}{ccc}
a_{1} & b_{1} & c_{1}%
\end{array}%
\right] u$ for some $u\in U(R)$.
\end{proposition}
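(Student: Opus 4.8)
The plan is to reduce the proposition to Lemma~\ref{1} applied to the ``primitive parts'' $\left[\begin{array}{ccc} a_{1} & b_{1} & c_{1}\end{array}\right]$ and $\left[\begin{array}{ccc} a_{1}^{\prime} & b_{1}^{\prime} & c_{1}^{\prime}\end{array}\right]$. (We may assume both of the original rows are nonzero, since otherwise the matrix has a zero row, one of $\delta$, $\lambda$ is $0$, and there is nothing to prove.)

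First I would check that the rank-one hypothesis descends to the primitive parts. Substituting $a=\delta a_{1}$, $b=\delta b_{1}$, $c=\delta c_{1}$, $a^{\prime}=\lambda a_{1}^{\prime}$, $b^{\prime}=\lambda b_{1}^{\prime}$, $c^{\prime}=\lambda c_{1}^{\prime}$ into $ab^{\prime}=a^{\prime}b$ gives $\delta\lambda\,a_{1}b_{1}^{\prime}=\delta\lambda\,a_{1}^{\prime}b_{1}$; since $R$ is a domain and $\delta\lambda\neq 0$, cancellation yields $a_{1}b_{1}^{\prime}=a_{1}^{\prime}b_{1}$, and the same substitution in the other two minor relations gives $a_{1}c_{1}^{\prime}=a_{1}^{\prime}c_{1}$ and $b_{1}c_{1}^{\prime}=b_{1}^{\prime}c_{1}$. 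Hence $\mathrm{rk}\left[\begin{array}{ccc} a_{1} & b_{1} & c_{1} \\ a_{1}^{\prime} & b_{1}^{\prime} & c_{1}^{\prime}\end{array}\right]=1$, and by construction $\gcd(a_{1},b_{1},c_{1})=1$ and $\gcd(a_{1}^{\prime},b_{1}^{\prime},c_{1}^{\prime})=1$, so both rows are primitive.

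Now I would invoke Lemma~\ref{1}. The one subtle point is that Lemma~\ref{1} is stated for \emph{unimodular} rows, while here I only know the rows $\left[\begin{array}{ccc} a_{1} & b_{1} & c_{1}\end{array}\right]$ and $\left[\begin{array}{ccc} a_{1}^{\prime} & b_{1}^{\prime} & c_{1}^{\prime}\end{array}\right]$ are \emph{primitive} (gcd $1$), which over a GCD domain that is not B\'{e}zout is strictly weaker. However, inspecting the proof of Lemma~\ref{1} one sees that the unimodularity of the rows is used only to deduce $\gcd(\delta,c)=\gcd(a,b,c)=1$ (and symmetrically $\gcd(\delta^{\prime},c^{\prime})=1$), and both of these already follow from primitivity of the rows. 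Thus the statement and proof of Lemma~\ref{1} remain valid verbatim with ``unimodular'' replaced by ``primitive''. Applying this version to $\left[\begin{array}{ccc} a_{1} & b_{1} & c_{1}\end{array}\right]$ and $\left[\begin{array}{ccc} a_{1}^{\prime} & b_{1}^{\prime} & c_{1}^{\prime}\end{array}\right]$ yields that the pairs $a_{1},a_{1}^{\prime}$, $b_{1},b_{1}^{\prime}$, $c_{1},c_{1}^{\prime}$ are associated and produces a unit $u\in U(R)$ with $\left[\begin{array}{ccc} a_{1}^{\prime} & b_{1}^{\prime} & c_{1}^{\prime}\end{array}\right]=\left[\begin{array}{ccc} a_{1} & b_{1} & c_{1}\end{array}\right]u$, which is exactly the assertion.

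The whole argument is a bookkeeping reduction, so I expect no real obstacle; the only place that genuinely needs care is the remark above, namely verifying that Lemma~\ref{1} does not secretly use the full strength of unimodularity but only the vanishing of the three-fold gcd of each row.
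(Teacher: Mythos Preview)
Your proposal is correct and follows exactly the paper's approach, which is simply ``We just use the previous lemma.'' Your additional observation that Lemma~\ref{1} only needs primitivity (gcd $=1$) rather than genuine unimodularity is a valid point that the paper leaves implicit; since the proposition is stated over a GCD domain while the later applications are over a B\'{e}zout domain (where the two notions coincide), this distinction does not affect the paper's use of the result, but your check is the right thing to do.
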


\begin{proof}
We just use the previous lemma.
\end{proof}

In the sequel, for 3-vectors we use the well-known operations of \emph{dot}
product, \emph{cross} product and \emph{scalar triple} product.

\textbf{Definition}. The 3-vector $\mathbf{a}=(a_{1},a_{2},a_{3})\in R^{3}$
is \textsl{unimodular} iff the ideal generated by its components is the
whole ring, i.e. $I=(a_{1},a_{2},a_{3})=Ra_{1}+Ra_{2}+Ra_{3}=R$.
Equivalently, there exists $\mathbf{b}=(b_{1},b_{2},b_{3})\in R^{3}$ such
that $\mathbf{a}\cdot \mathbf{b}=1$.

More detailed, for 3 elements of a ring $a_{1},a_{2},a_{3}\in R$, the ideal
generated by these $I=(a_{1},a_{2},a_{3})=Ra_{1}+Ra_{2}+Ra_{3}$ can be the
whole ring $R$, case when $\{a_{1},a_{2},a_{3}\}$ (ideal) generates the
whole $R$, or else, it is \emph{not} the whole ring. Since by Zorn's Lemma,
every proper ideal is included in a maximal ideal, the second case can be
characterized as follows: the system $\{a_{1},a_{2},a_{3}\}$ is \emph{not}
an (ideal) generating system iff these elements (and so is the ideal these
generate) are included in a maximal ideal.

This way, a 3-vector $\mathbf{a}=(a_{1},a_{2},a_{3})\in R^{3}$ is unimodular
iff $\{a_{1},a_{2},a_{3}\}$ is not included in any maximal ideal $M$ of $R$.
Equivalently, for every maximal ideal $M$ of $R$, at least one of the $%
a_{i}\notin M$, or else, at least one of $a_{1}+M,a_{2}+M,a_{3}+M\in R/M$ is 
$\neq M$ (i.e. is not zero in $R/M$).

To simplify the writing, we denote $\mathbf{a}+M=(a_{1}+M,a_{2}+M,a_{3}+M)$,
and this can be viewed as a 3-vector in $(R/M)^{3}$. Moreover, we extend
accordingly the dot product $(\mathbf{a}+M)\cdot (\mathbf{b}+M)=\mathbf{a}%
\cdot \mathbf{b}+M\in R/M$.

\begin{proposition}
Suppose $\mathbf{a}=(a_{1},a_{2},a_{3})$, $\mathbf{b}=(b_{1},b_{2},b_{3})$, $%
\mathbf{c}=(c_{1},c_{2},c_{3})$ are unimodular 3-vectors such that $\mathbf{%
a\cdot b}=1$, $\mathbf{a\cdot c}=0$. Then the cross product $\mathbf{b}%
\times \mathbf{c}$ is also a unimodular row.
\end{proposition}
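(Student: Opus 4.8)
The plan is to verify unimodularity of $\mathbf{b}\times\mathbf{c}$ locally, that is, to show that for every maximal ideal $M$ of $R$ at least one component of $\mathbf{b}\times\mathbf{c}$ lies outside $M$. Reducing modulo $M$ turns everything into a statement about $3$-vectors over the field $k=R/M$, where the dot and cross products behave as usual; since unimodularity passes to nonzero-ness modulo $M$, the hypotheses give that $\overline{\mathbf{a}},\overline{\mathbf{b}},\overline{\mathbf{c}}$ are all nonzero in $k^3$, with $\overline{\mathbf{a}}\cdot\overline{\mathbf{b}}=1$ and $\overline{\mathbf{a}}\cdot\overline{\mathbf{c}}=0$. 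What I want is $\overline{\mathbf{b}}\times\overline{\mathbf{c}}\neq\mathbf{0}$ in $k^3$.

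The key linear-algebra fact is that for vectors in a $3$-dimensional vector space over a field, $\mathbf{u}\times\mathbf{v}=\mathbf{0}$ precisely when $\mathbf{u}$ and $\mathbf{v}$ are linearly dependent. So it suffices to show $\overline{\mathbf{b}}$ and $\overline{\mathbf{c}}$ are linearly independent over $k$. Suppose instead $\overline{\mathbf{c}}=\mu\,\overline{\mathbf{b}}$ for some $\mu\in k$ (this covers all dependent cases, since $\overline{\mathbf{b}}\neq\mathbf{0}$; if $\overline{\mathbf{c}}=\mathbf{0}$ take $\mu=0$). Then $0=\overline{\mathbf{a}}\cdot\overline{\mathbf{c}}=\mu\,(\overline{\mathbf{a}}\cdot\overline{\mathbf{b}})=\mu\cdot 1=\mu$, forcing $\mu=0$ and hence $\overline{\mathbf{c}}=\mathbf{0}$, contradicting that $\mathbf{c}$ is unimodular. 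Therefore $\overline{\mathbf{b}},\overline{\mathbf{c}}$ are independent, $\overline{\mathbf{b}}\times\overline{\mathbf{c}}\neq\mathbf{0}$, and since $\overline{\mathbf{b}\times\mathbf{c}}=\overline{\mathbf{b}}\times\overline{\mathbf{c}}$, some component of $\mathbf{b}\times\mathbf{c}$ avoids $M$. As $M$ was an arbitrary maximal ideal, $\mathbf{b}\times\mathbf{c}$ is unimodular.

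The only real subtlety is the bookkeeping of reduction mod $M$: one must note that the componentwise reduction map $R^3\to(R/M)^3$ is compatible with dot and cross products (this is immediate since both are polynomial in the entries), and that a vector over $R$ is unimodular iff its reduction mod every maximal ideal is nonzero — which is exactly the characterization of unimodularity recorded just before this proposition. Granting those, the argument above is the whole proof; I expect no computational obstacle, the content being entirely the one-line dimension count $\mathbf{u}\times\mathbf{v}=\mathbf{0}\iff\mathbf{u},\mathbf{v}$ dependent over a field, combined with the orthogonality relations.
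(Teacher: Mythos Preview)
Your proof is correct and follows essentially the same route as the paper: reduce modulo an arbitrary maximal ideal $M$, use that $R/M$ is a field, observe that $\overline{\mathbf{b}}$ and $\overline{\mathbf{c}}$ are nonzero and linearly independent (from $\overline{\mathbf{a}}\cdot\overline{\mathbf{b}}=1$, $\overline{\mathbf{a}}\cdot\overline{\mathbf{c}}=0$), and conclude that their cross product is nonzero. The only cosmetic difference is that the paper writes the hypothetical dependence as $\overline{\mathbf{b}}=k\,\overline{\mathbf{c}}$ and derives $1=0$ directly, whereas you write $\overline{\mathbf{c}}=\mu\,\overline{\mathbf{b}}$ and reach the contradiction via $\overline{\mathbf{c}}=0$.
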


\begin{proof}
As mentioned above, it suffices to show that the 3-vector $\mathbf{b}\times 
\mathbf{c}$ (as customarily identified with the (ideal) generating system $%
\{b_{2}c_{3}-b_{3}c_{2},b_{3}c_{1}-b_{1}c_{3},b_{1}c_{2}-b_{2}c_{1}\}$) is
nonzero, modulo any maximal ideal. Since $R$ is a commutative (unital) ring,
modulo any maximal ideal $M$ of $R$, $R/M$ is a field and (with the above
notation) $\mathbf{b}+M$, $\mathbf{c}+M$ are nonzero 3-vectors in $(R/M)^{3}$
(otherwise these are not unimodular). It is easy to see that these two
vectors are linearly independent (indeed, if $(\mathbf{a}+M)\mathbf{\cdot }(%
\mathbf{b}+M)=1+M$, $(\mathbf{a}+M)\mathbf{\cdot }(\mathbf{c}+M)=M$ and $%
\mathbf{b}+M=k(\mathbf{c}+M)$ for some $k\in R$ then $1+M=M$, impossible).
Hence their cross product is (well-known to be) nonzero and the proof is
complete.
\end{proof}

\begin{proposition}
Let $R$ be a commutative ring and let $\mathbf{a}$, $\mathbf{b}$, $\mathbf{c}
$ be unimodular $3$-vectors such that $\mathbf{a}\cdot \mathbf{b}=1$ and $%
\mathbf{a\cdot c}=0$. There exists a unimodular $3$-vector $\mathbf{x}$,
also orthogonal on $\mathbf{a}$, such that $\mathbf{b}\cdot (\mathbf{x}%
\times \mathbf{c})=1$.
\end{proposition}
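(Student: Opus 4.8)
The plan is to exhibit $\mathbf{x}$ explicitly in the form $\mathbf{x}=\mathbf{a}\times\mathbf{v}$ for a suitably chosen $\mathbf{v}\in R^{3}$. This shape is dictated by the orthogonality requirement: any cross product $\mathbf{a}\times\mathbf{v}$ is automatically orthogonal to $\mathbf{a}$, since $\mathbf{a}\cdot(\mathbf{a}\times\mathbf{v})=0$. Thus the whole problem reduces to picking $\mathbf{v}$ so that the scalar triple product condition $\mathbf{b}\cdot(\mathbf{x}\times\mathbf{c})=1$ holds and, simultaneously, $\mathbf{x}$ turns out to be unimodular.

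First I would compute $\mathbf{b}\cdot\bigl((\mathbf{a}\times\mathbf{v})\times\mathbf{c}\bigr)$ using the vector triple product identity $(\mathbf{p}\times\mathbf{q})\times\mathbf{r}=(\mathbf{p}\cdot\mathbf{r})\,\mathbf{q}-(\mathbf{q}\cdot\mathbf{r})\,\mathbf{p}$, which is a polynomial identity and hence valid over any commutative ring. Because $\mathbf{a}\cdot\mathbf{c}=0$, it collapses to $(\mathbf{a}\times\mathbf{v})\times\mathbf{c}=-(\mathbf{v}\cdot\mathbf{c})\,\mathbf{a}$, and dotting with $\mathbf{b}$ and using $\mathbf{a}\cdot\mathbf{b}=1$ gives $\mathbf{b}\cdot(\mathbf{x}\times\mathbf{c})=-(\mathbf{v}\cdot\mathbf{c})$. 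So the triple product condition is equivalent to $\mathbf{v}\cdot\mathbf{c}=-1$, which can be arranged because $\mathbf{c}$ is unimodular: choose $\mathbf{w}$ with $\mathbf{w}\cdot\mathbf{c}=1$ and set $\mathbf{v}=-\mathbf{w}$, then put $\mathbf{x}=\mathbf{a}\times\mathbf{v}$.

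It remains to check that $\mathbf{x}=\mathbf{a}\times\mathbf{v}$ is unimodular, and this is the step carrying the actual content. I would argue as in the preceding proposition, by reduction modulo an arbitrary maximal ideal $M$ of $R$: it suffices to show $\mathbf{a}\times\mathbf{v}\notin M^{3}$ for every such $M$. In the field $R/M$ the vector $\mathbf{a}+M$ is nonzero (as $\mathbf{a}$ is unimodular); if $(\mathbf{a}+M)\times(\mathbf{v}+M)=M$ then $\mathbf{v}+M$ would be a scalar multiple of $\mathbf{a}+M$, forcing $(\mathbf{v}+M)\cdot(\mathbf{c}+M)$ to be a scalar multiple of $(\mathbf{a}+M)\cdot(\mathbf{c}+M)=M$, i.e.\ to vanish in $R/M$; but $(\mathbf{v}+M)\cdot(\mathbf{c}+M)=-1+M\neq M$, a contradiction. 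Hence $\mathbf{x}$ is unimodular, and together with the two identities above this gives all three required properties.

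The main obstacle, modest as it is, is this unimodularity verification: one must be careful to invoke only the data available over an arbitrary commutative ring, the crucial observation being that the normalization $\mathbf{v}\cdot\mathbf{c}=-1$ is exactly what keeps $\mathbf{v}$ (and hence $\mathbf{a}\times\mathbf{v}$) off the degenerate locus modulo each maximal ideal, given $\mathbf{a}\cdot\mathbf{c}=0$. The choice of $\mathbf{v}$, the orthogonality of $\mathbf{x}$ to $\mathbf{a}$, and the triple product evaluation are purely formal vector algebra.
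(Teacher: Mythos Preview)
Your proof is correct and follows a genuinely different route from the paper's. The paper first invokes the preceding proposition to conclude that $\mathbf{b}\times\mathbf{c}$ is unimodular, then chooses any $\mathbf{x}$ with $(\mathbf{b}\times\mathbf{c})\cdot\mathbf{x}=1$; this single equation simultaneously delivers the required triple product value (up to a harmless sign) and the unimodularity of $\mathbf{x}$, and orthogonality to $\mathbf{a}$ is obtained afterwards by replacing $\mathbf{x}$ with $\mathbf{x}-(\mathbf{a}\cdot\mathbf{x})\,\mathbf{b}$. You instead build in orthogonality from the start via the ansatz $\mathbf{x}=\mathbf{a}\times\mathbf{v}$, and the BAC--CAB identity together with $\mathbf{a}\cdot\mathbf{c}=0$ and $\mathbf{a}\cdot\mathbf{b}=1$ collapses the triple product requirement to the single linear condition $\mathbf{v}\cdot\mathbf{c}=-1$, solvable because $\mathbf{c}$ is unimodular. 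The trade-off is clean: the paper gets unimodularity of $\mathbf{x}$ for free but must import the previous proposition about $\mathbf{b}\times\mathbf{c}$; your argument is self-contained and never uses that proposition, at the cost of redoing an analogous maximal-ideal computation for the pair $\mathbf{a},\mathbf{v}$ (where $\mathbf{v}\cdot\mathbf{c}=-1$ plays the role that $\mathbf{a}\cdot\mathbf{b}=1$ played there). Either argument is short and fully adequate.
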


\begin{proof}
By the above proposition, since $\mathbf{b}\times \mathbf{c}$ (which is just
the three $2\times 2$ minors of the matrix $[\mathbf{bc}]=\left[ 
\begin{array}{ccc}
b_{1} & b_{2} & b_{3} \\ 
c_{1} & c_{2} & c_{3}%
\end{array}%
\right] $) is also unimodular, there exists a unimodular $3$-vector $\mathbf{%
x}$ such that $(\mathbf{b}\times \mathbf{c})\cdot \mathbf{x}=1$, that is, $%
\det [\mathbf{bcx}]=1$.

Hence $\mathbf{b}\cdot (\mathbf{x}\times \mathbf{c})=1$. If $\mathbf{a}\cdot 
\mathbf{x}=s$, replace $\mathbf{x}$ by $\mathbf{x}-s\mathbf{b}$ and then
this vector is also orthogonal on $\mathbf{a}$ (indeed, $\mathbf{a}\cdot (%
\mathbf{x}-s\mathbf{b})=\mathbf{a}\cdot \mathbf{x}-s(\mathbf{a}\cdot \mathbf{%
b})=s-s=0$).
\end{proof}

We are now ready to prove our main result

\begin{theorem}
\label{3}Zero-square $3\times 3$ matrices over a B\'{e}zout domain, are
similar to multiples of $E_{13}$.
\end{theorem}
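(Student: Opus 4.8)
The plan is to show that every such $T$ has a rank-one form $T=r\,\mathbf{u}\,\mathbf{v}^{T}$ with $r\in R$ and $\mathbf{u},\mathbf{v}\in R^{3}$ unimodular columns satisfying $\mathbf{u}\cdot\mathbf{v}=0$, and then to conjugate this matrix into $rE_{13}$ by a completion matrix supplied by the last of the Propositions above. We may assume $T\neq 0_{3}$, since otherwise $T=0\cdot E_{13}$.

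\emph{Step 1 (reduction to a rank-one form).} First I would record the structural consequences of $T^{2}=0_{3}$: passing to the fraction field $K$ of $R$, the image of $T$ is contained in $\ker T$, so $\mathrm{rk}(T)\le 3-\mathrm{rk}(T)$, i.e. $\mathrm{rk}(T)\le 1$, and $T$ is nilpotent over $K$, so $\mathrm{Tr}(T)=0$; since $R\hookrightarrow K$, all $2\times 2$ minors of $T$ vanish in $R$ and $\mathrm{Tr}(T)=0$ in $R$ (alternatively, invoke Theorem \ref{2-min} together with the Remark after Corollary \ref{rt}). Now pick a nonzero row of $T$, say row $1$. For each row $R_{j}$ put $d_{j}=\gcd(R_{j})$; if $R_{j}\neq 0$ write $R_{j}=d_{j}S_{j}$ with $\gcd(S_{j})=1$, hence $S_{j}$ unimodular (here $R$ B\'{e}zout is used). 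For each such $j$, the $2\times 2$ minors on rows $1$ and $j$ of $T$ vanish and $d_{1}d_{j}\neq 0$ in the domain $R$, so the $2\times 2$ minors of the matrix with rows $S_{1},S_{j}$ vanish; Proposition \ref{2} (equivalently Lemma \ref{1}) then gives $S_{j}=S_{1}u_{j}$ for some $u_{j}\in U(R)$. A zero row is trivially a multiple of $S_{1}$. Hence $T=\mathbf{c}\,S_{1}$ for a nonzero column $\mathbf{c}\in R^{3}$; writing $\mathbf{c}=r\,\mathbf{u}$ with $r=\gcd(\mathbf{c})\neq 0$ and $\mathbf{u}$ unimodular (B\'{e}zout again), and setting $\mathbf{v}^{T}=S_{1}$, we get $T=r\,\mathbf{u}\,\mathbf{v}^{T}$ with $\mathbf{u},\mathbf{v}$ unimodular. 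Finally $0=\mathrm{Tr}(T)=r\,(\mathbf{u}\cdot\mathbf{v})$ and $r\neq 0$ force $\mathbf{u}\cdot\mathbf{v}=0$.

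\emph{Step 2 (completion and conjugation).} Since $\mathbf{v}$ is unimodular, choose $\mathbf{b}\in R^{3}$ with $\mathbf{v}\cdot\mathbf{b}=1$; such a $\mathbf{b}$ is automatically unimodular. Apply the last Proposition above (the one producing, from $\mathbf{a}\cdot\mathbf{b}=1$ and $\mathbf{a}\cdot\mathbf{c}=0$, a unimodular $\mathbf{x}$ with $\mathbf{a}\cdot\mathbf{x}=0$ and $\mathbf{b}\cdot(\mathbf{x}\times\mathbf{c})=1$) with $\mathbf{a}:=\mathbf{v}$, that $\mathbf{b}$, and $\mathbf{c}:=\mathbf{u}$; this is legitimate since $\mathbf{v}\cdot\mathbf{b}=1$ and $\mathbf{v}\cdot\mathbf{u}=0$, and it yields a unimodular $\mathbf{x}\in R^{3}$ with $\mathbf{v}\cdot\mathbf{x}=0$ and $\mathbf{b}\cdot(\mathbf{x}\times\mathbf{u})=1$. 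Let $Q$ be the $3\times 3$ matrix with columns $\mathbf{u},\mathbf{x},\mathbf{b}$. Then $\det Q=\mathbf{u}\cdot(\mathbf{x}\times\mathbf{b})=-\mathbf{b}\cdot(\mathbf{x}\times\mathbf{u})=-1\in U(R)$, so $Q$ is invertible, and $\mathbf{v}^{T}Q=(\mathbf{v}\cdot\mathbf{u},\;\mathbf{v}\cdot\mathbf{x},\;\mathbf{v}\cdot\mathbf{b})=(0,0,1)=\mathbf{e}_{3}^{T}$. Put $P:=Q^{-1}$. Since $\mathbf{u}$ is the first column of $Q$, $P\mathbf{u}=\mathbf{e}_{1}$, and from $\mathbf{v}^{T}Q=\mathbf{e}_{3}^{T}$ we get $\mathbf{v}^{T}P^{-1}=\mathbf{e}_{3}^{T}$. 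Therefore
\[
P\,T\,P^{-1}=r\,(P\mathbf{u})(\mathbf{v}^{T}P^{-1})=r\,\mathbf{e}_{1}\mathbf{e}_{3}^{T}=rE_{13},
\]
which is the assertion (replacing $\mathbf{x}$ by $-\mathbf{x}$ if one prefers $\det Q=1$; this changes nothing else).

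The substantive point is the completion in Step 2: one must produce a third vector that is simultaneously orthogonal to $\mathbf{v}$ and completes $\mathbf{u}$ to a matrix of unit determinant — precisely the special completion problem announced in the introduction, and precisely what the last Proposition delivers, its own proof resting on the unimodularity of the cross product $\mathbf{b}\times\mathbf{c}$. The only other thing to keep straight is the transpose bookkeeping: matching ``first column of $Q$'' with $P\mathbf{u}=\mathbf{e}_{1}$ and ``$\mathbf{v}^{T}Q=\mathbf{e}_{3}^{T}$'' with $\mathbf{v}^{T}P^{-1}=\mathbf{e}_{3}^{T}$, so that the conjugate lands on $E_{13}$ rather than on $E_{31}$.
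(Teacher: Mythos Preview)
Your proof is correct and follows essentially the same route as the paper's: factor $T$ as a column times a unimodular row using the row-gcd argument and Proposition~\ref{2}, extract $r=\gcd(T)$ to obtain $T=r\,\mathbf{u}\,\mathbf{v}^{T}$ with $\mathbf{u},\mathbf{v}$ unimodular and orthogonal, and then invoke the completion proposition to build the conjugating matrix with columns $\mathbf{u},\mathbf{x},\mathbf{b}$ --- this is exactly the paper's $U$ with $\mathrm{col}_1(U)=(\delta_1,\lambda_1u,\gamma_1v)$, $\mathrm{col}_3(U)=(s,t,z)$ and $\mathrm{col}_2(U)$ produced by the same proposition. The only noteworthy variation is your fraction-field argument for $\mathrm{rk}(T)\le 1$ in place of the paper's appeal to Theorem~\ref{2-min}; this is a harmless (indeed slightly more robust) alternative, and you correctly flag the equivalence.
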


\begin{proof}
Again consider $T=\left[ 
\begin{array}{ccc}
x & a & c \\ 
b & y & e \\ 
d & f & -x-y%
\end{array}%
\right] $ with $T^{2}=0_{3}$ (by Theorem \ref{2-min}, $\mathrm{rank}(T)=1$,
that is, all $2\times 2$ minors are zero).

Denote $\delta =\gcd (x,a,c)$, $\lambda =\gcd (b,y,e)$ and $\gamma =\gcd
(d,f,x+y)$ so that $x=\delta x_{1}$, $a=\delta a_{1}$, $c=\delta c_{1}$, $%
b=\lambda b_{1}$, $y=\lambda y_{1}$, $e=\lambda e_{1}$, $d=\gamma d_{1}$, $%
f=\gamma f_{1}$ and $x+y=\gamma (x_{2}+y_{2})$.

According to Proposition \ref{2}, there are units $u,v$ such that $\left[ 
\begin{array}{ccc}
b_{1} & y_{1} & e_{1}%
\end{array}%
\right] =\left[ 
\begin{array}{ccc}
x_{1} & a_{1} & c_{1}%
\end{array}%
\right] u$ and $\left[ 
\begin{array}{ccc}
d_{1} & f_{1} & -x_{2}-y_{2}%
\end{array}%
\right] =\left[ 
\begin{array}{ccc}
x_{1} & a_{1} & c_{1}%
\end{array}%
\right] v$.

Hence $T=\left[ 
\begin{array}{ccc}
\delta x_{1} & \delta a_{1} & \delta c_{1} \\ 
\lambda ux_{1} & \lambda ua_{1} & \lambda uc_{1} \\ 
\gamma vx_{1} & \gamma va_{1} & \gamma vc_{1}%
\end{array}%
\right] $ and since $\left[ 
\begin{array}{ccc}
x_{1} & a_{1} & c_{1}%
\end{array}%
\right] $ is unimodular, there are $s,t,z\in R$ and $sx_{1}+ta_{1}+zc_{1}=1$.

Note that $\mathrm{Tr}(T)=\delta x_{1}+\lambda ua_{1}+\gamma vc_{1}=0$.

Denote $r=\gcd (\delta ,\lambda ,\gamma )=\gcd (T)$ and denote $\delta
=r\delta _{1}$, $\lambda =r\lambda _{1}$, $\gamma =r\gamma _{1}$. We are
looking for an invertible matrix $U$ such that $TU=U(rE_{13})=\left[ 
\begin{array}{ccc}
0 & 0 & ru_{11} \\ 
0 & 0 & ru_{21} \\ 
0 & 0 & ru_{31}%
\end{array}%
\right] $.

Our choice for $r$ is necessary: indeed, writing $T=rUE_{13}U^{-1}$, shows
that $r$ divides all entries of $T$. Also note that, if $\det (U)=1$, every
row and every column of $U$ is unimodular.

We choose $\mathrm{col}_{3}(U)=\left[ 
\begin{array}{c}
s \\ 
t \\ 
z%
\end{array}%
\right] $. By computation

$ru_{11}=\mathrm{row}_{1}(T)\cdot \mathrm{col}_{3}(U)=\delta
(x_{1}u_{13}+a_{1}u_{23}+c_{1}u_{33})=\delta $,

$ru_{21}=\mathrm{row}_{2}(T)\cdot \mathrm{col}_{3}(U)=\lambda
u(x_{1}u_{13}+a_{1}u_{23}+c_{1}u_{33})=\lambda u$,

$ru_{31}=\mathrm{row}_{3}(T)\cdot \mathrm{col}_{3}(U)=\gamma
v(x_{1}u_{13}+a_{1}u_{23}+c_{1}u_{33})=\gamma v$, and,

$\left[ 
\begin{array}{ccc}
x_{1} & a_{1} & c_{1}%
\end{array}%
\right] \left[ 
\begin{array}{c}
u_{11} \\ 
u_{21} \\ 
u_{31}%
\end{array}%
\right] =\left[ 
\begin{array}{ccc}
x_{1} & a_{1} & c_{1}%
\end{array}%
\right] \left[ 
\begin{array}{c}
u_{12} \\ 
u_{22} \\ 
u_{32}%
\end{array}%
\right] =0$ and so

$\left[ 
\begin{array}{ccc}
x_{1} & a_{1} & c_{1}%
\end{array}%
\right] U=\left[ 
\begin{array}{ccc}
0 & 0 & 1%
\end{array}%
\right] $.

Hence, the first column of $U$ must be $\mathrm{col}_{1}(U)=\left[ 
\begin{array}{c}
u_{11} \\ 
u_{21} \\ 
u_{31}%
\end{array}%
\right] =\left[ 
\begin{array}{c}
\dfrac{\delta }{r} \\ 
\dfrac{\lambda }{r}u \\ 
\dfrac{\gamma }{r}v%
\end{array}%
\right] $. These fractions exist since $r=\gcd (\delta ,\lambda ,\gamma )$.

We indeed have $\left[ 
\begin{array}{ccc}
x_{1} & a_{1} & c_{1}%
\end{array}%
\right] \left[ 
\begin{array}{c}
\dfrac{\delta }{r} \\ 
\dfrac{\lambda }{r}u \\ 
\dfrac{\gamma }{r}v%
\end{array}%
\right] =\dfrac{1}{r}(\delta x_{1}+\lambda ua_{1}+\gamma vc_{1})=\dfrac{1}{r}%
(x+y-(x+y))=0$ (because $\mathrm{Tr}(T)=0$).

We are searching for a suitable column $\mathrm{col}_{2}(U)$ such that $U=%
\left[ 
\begin{array}{ccc}
\delta _{1} & u_{12} & s \\ 
\lambda _{1}u & u_{22} & t \\ 
\gamma _{1}v & u_{32} & z%
\end{array}%
\right] $ is invertible and $\left[ 
\begin{array}{ccc}
x_{1} & a_{1} & c_{1}%
\end{array}%
\right] U=\left[ 
\begin{array}{ccc}
0 & 0 & 1%
\end{array}%
\right] $.

Taking $\mathbf{a}=\left[ x_{1} \ a_{1} \ c_{1} \right]$, $\mathbf{b}=\left[
s \ t \ z \right] $ and $\mathbf{c}=\left[ \delta _{1} \ \lambda _{1}u \
\gamma _{1}v \right]$ the existence of $\mathbf{x}=\left[ u_{12} \ u_{22} \
u_{32} \right] $ follows (by transpose) from the previous proposition.
\end{proof}

\textbf{Example}. Now $x_{1}=6=2\cdot 3$, $a_{1}=10=2\cdot 5$, $%
c_{1}=15=3\cdot 5$, so that no two of these are coprime.

$T_{6}=\left[ 
\begin{array}{ccc}
-180 & -300 & -450 \\ 
90 & 150 & 225 \\ 
12 & 20 & 30%
\end{array}%
\right] $, $\delta =-30$, $\lambda =15$, $\gamma =2$ and so $r=1$.

The second equation is a linear Diophantine equation, $6s+10t+15z=1$. We
denote $w=3s+5t$ and solve $2w+15z=1$. It gives $w=-7+15n$, $z=1-2n$.

We choose $w=-7$ (for $n=0$) and solve $3s+5t=-7$. This gives for instance $%
s=-14$, $t=7$, so we choose also $z=1$ and $U=\left[ 
\begin{array}{ccc}
-30 & u_{12} & -14 \\ 
15 & u_{22} & 7 \\ 
2 & u_{32} & 1%
\end{array}%
\right] $.

Now the first equation is 
\begin{equation*}
-u_{12}\left\vert 
\begin{array}{cc}
15 & 7 \\ 
2 & 1%
\end{array}%
\right\vert +u_{22}\left\vert 
\begin{array}{cc}
-30 & -14 \\ 
2 & 1%
\end{array}%
\right\vert -u_{32}\left\vert 
\begin{array}{cc}
-30 & -14 \\ 
15 & 7%
\end{array}%
\right\vert =1,
\end{equation*}%
that is, $-u_{12}-2u_{22}=1$.

Hence $2u_{22}=-1-u_{12}$ and so $6u_{12}-5-5u_{12}+15u_{32}=0$ or $%
u_{12}+15u_{32}=5$. We can choose $u_{12}=5$, $u_{32}=0$ and so $u_{22}=-3$.

Indeed $\left[ 
\begin{array}{ccc}
-180 & -300 & -450 \\ 
90 & 150 & 225 \\ 
12 & 20 & 30%
\end{array}%
\right] \left[ 
\begin{array}{ccc}
-30 & 5 & -14 \\ 
15 & -3 & 7 \\ 
2 & 0 & 1%
\end{array}%
\right] =\left[ 
\begin{array}{ccc}
0 & 0 & -30 \\ 
0 & 0 & 15 \\ 
0 & 0 & 2%
\end{array}%
\right] $, as desired.

\end{document}